\newcommand\real{{\mathrm I}\!{\mathrm R} }
\newcommand\nat{{{\mathrm I}\!\!\!{\mathrm N}}}
\newcommand\rat{{\mathrm Q}\kern-.65em {}^{{}_/ }}
\newtheorem{corollary}{Corollary}
\newtheorem{theorem}{Theorem}
\newtheorem{lemma}{Lemma}
\begin{document}
\title{Equilibrium states for smooth maps}
\author{Abdelhamid Amroun}
\maketitle
\address{\begin{center}
Universit\'e Paris-Sud, D\'epartement de Math\'ematiques,
CNRS UMR 8628, 91405 Orsay Cedex France
\end{center}}

\begin{abstract} We prove an equidistribution result for $C^{\infty}$
maps with respect to equilibrium states. 
We apply the result to the time-one map of
the geodesic flow of a closed smooth Riemannian manifold.
\end{abstract}

\section{Introduction and main results}

The techniques of large deviations are widely used in the theory of
dynamical systems to describe their statistical properties (see for
example \cite{Ki}, \cite{lsy} and \cite{lsyg}). In this paper we will use
these techniques to prove an equidistribution result for
smooth maps (Theorem $1$ $(3)$) with
respect to equilibrium states, i.e invariant
measures which maximize the topological pressure. Furthermore, we
will prove that the proportion of orbits supporting a Dirac measure close to an
equilibrium state is close to $1$ (Theorem $1$ $(2)$).
The general method combines the ideas in \cite{amr} for the geodesic
flow, where a more geometric point of view was considered. 
The main result (Theorem $1$) applies to the
time-one map of the geodesic flow of compact Riemannian manifold
and as a consequence we give a new version of the results in
\cite{amr} (Corollary $1$).

The process for which the large
deviations are computed lies in the space of probability measures of
the phase space. 
Let $X$ be a compact metric space and $\mathcal{P}(X)$ the space of probability
measures on $X$ endowed with the topology of weak convergence of measures.
Let $\{\nu_{n}\}_{n\in \nat}$ be a family of probability measures on
$\mathcal{P}(X)$. A large deviation principle for this
process consists on the two following bounds: for any closed
subset $K$ and any open subset $O$ of $\mathcal{P}(X)$,
\begin{equation}
\limsup_{n\rightarrow \infty}\frac{1}{n}\log \nu_{n}(K)\leq -J(K), \ 
\liminf_{n\rightarrow \infty}\frac{1}{n}\log \nu_{n}(O)\geq -J(O)
\end{equation}
where $J$ is some positive functional defined on subsets of
$\mathcal{P}(X)$. 
In what follows, we will give a precise description of $(1)$ for
smooth maps on a compact manifold $X$ (Theorem $1$ $(1)$ and $(4)$).
Under some conditions we will
prove a contraction principle for $\{\nu_{n}\}_{n\in \nat}$ which is a
large deviation principle with constraints (Theorem $2$).
While a direct proof is given for the upper bound in $(1)$, the lower
bound in $(1)$ will follow from this contraction principle. In other
words, to obtain the lower bound, we have to prove a finite
dimensional version of the large deviation lower bound
(see Theorem $2$). The main tool in this work is a formula by
Kozlovski \cite{Koz} for the topological pressure of a
$C^{\infty}$-map (Theorem $3$ below). This formula suggests that, at
least when there is a unique equilibrium state, the convergence will
take place exponentially fast, but it is not clear for what reasonable
class of potentials. Certainly we have to impose some hyperbolic
structure on the dynamics but it is remarkable that
Kozlovski's formula only requires smoothness.

\subsection{Preliminaries and notations}
Let $X$ be a smooth compact  manifold and $f: X\rightarrow X$ a 
$C^{\infty}$ map. We assume that $X$ has volume one, $\int_{X}dx=1$. 
We denote by $\mathcal{P}(X)$ the space of probability measures on $X$
endowed with the weak star topology. 
Let $\mathcal{P}_{inv}(X)$ be the
subset of $\mathcal{P}(X)$ of $f$-invariant probability measures. Given a
potential $\gamma \in C_{\real}(X)$, the topological pressure of
$\gamma$ is the 
number defined by the variational principle \cite{Wa},
\begin{equation}
P(\gamma)=\sup_{m\in \mathcal{P}_{inv}(X)}
\left ( h(m)+\int_{X} \gamma dm \right ),
\end{equation}
where $h(m)$ is the entropy of the measure $m$. For $F=0$ this reduces to
$P(0)=\sup_{m\in \mathcal{P}_{inv}(X)}h(m):=h_{top}$,
where $h_{top}$ is the topological entropy of $f$. 
An equilibrium state for $\gamma$ is a measure $m\in
\mathcal{P}_{inv}(X)$ which achieves the maximum in $(2)$:
\[
h(m)+\int_{X} \gamma dm=P(\gamma).
\]
We denote by $\mathcal{P}_{e}(\gamma)$ the subset of
$\mathcal{P}_{inv}(X)$ of equilibrium states corresponding to $\gamma$. By
a result of Newhouse \cite{Ne}, since $f$ is $C^{\infty}$, the
entropy map $m\rightarrow h(m)$ is upper semicontinuous. Then
$h_{top}<\infty$  
and consequently, the set $\mathcal{P}_{e}(\gamma)$
is a nonempty closed, compact, convex subset of $\mathcal{P}(X)$ \cite{Wa}.
 
We define the functional $Q_{\gamma}$ on $C_{\real}(X)$ based on the
potential $\gamma$ by,
\begin{equation}
Q_{\gamma}(\omega):=P(\gamma+\omega)-P(\gamma).
\end{equation}
By definition, $Q_{\gamma}$ is continuous
on continuous functions (see Lemma $1$).
Sometimes we will simply write $Q$, if there is no confusion to be
been afraid. We set for any probability measure $\mu$ on $X$,
\begin{equation}
J_{\gamma}(\mu):=\sup_{\omega}(\int \omega d\mu- Q_{\gamma}(\omega)),
\end{equation}
where the $\sup$ is taken over the space of continuous functions $\omega$ on
$X$. Observe that since $Q_{\gamma}(0)=0$, then $J_{\gamma}$ is a non
negative functional and clearly is lower semicontinuous.
We will see that (Lemma $1$) that $\mu \in \mathcal{P}_{e}(\gamma)$ if
and only if $\mu$ is invariant and $J_{\gamma}(\mu)=0$. 
Again, if there is no ambiguity we write $J$ instead of $J_{\gamma}$.
Since $J$ is convex and lower semicontinuous, we have by duality,
\begin{equation}
Q_{\gamma}(\omega)=
\sup_{\mu \in \mathcal{P}(X)}(\int \omega d\mu- J_{\gamma}(\mu)).
\end{equation}
For any set $E\subset \mathcal{P}(X)$ put
\[
J_{\gamma}(E):=\inf_{\mu \in E}J_{\gamma}(\mu).
\]

\subsection{The results}
We do the following assumption under which we
prove the lower bound part of Theorem $1$ (see also
\cite{Ki} and \cite{DZ}).

{\it Assumption A. There exists a countable set $\mathcal{C}$ of
 functions $\{g_{k}, k\geq 1\}\subset C_{\real}(X)$ such that their
 span is dense in 
 $C_{\real}(X)$ with respect to the topology of uniforme convergence,
  $\|g_{k}\|=1$ for all $k$, and for all $\beta \in \real^{n}$ the
  potential $\sum_{k=1}^{n}\beta_{k}g_{k}$ has a unique equilibrium state.}

This assumption holds in particular for H\"older continuous functions
when the dynamical system is strongly hyperbolic. This is the
case for Anosov flows (in particular for the geodesic flow of
negatively curved compact manifold) and hyperbolic diffeomorphisms
\cite{ru} \cite{fr}.

Given $x\in X$ we set
$\delta_{n}(x):=\frac{1}{n}\sum_{i=0}^{n-1}\delta_{f^{i}(x)}$, where
$\delta_{y}$ is the Dirac measure at $y$ and,
$\int_{X}\omega d\delta_{n}(x)
:=\frac{1}{n}\sum_{i=0}^{n-1}\omega(f^{i}(x))$. Sometimes, to simplify
the expressions we will simply denote it by $\delta_{n}(x)(\omega)$.

We also define the measures $l_{n}(dx):=\|\wedge (D_{x}f^{n})\|dx$ for
each $n\geq 1$, where $\|\wedge(D_{x}f^{n})\|$ can be seen as the maximum of
the volume of the images under $D_{x}f^{n}$ of the arbitrary
dimensional cubes with volume $1$. More formally,
$\|\wedge (D_{x}f^{n})\|:=\max_{j\leq dim X}\|\wedge^{j} (D_{x}f^{n})\|,
$ with
\[
\|\wedge^{j} (D_{x}f^{n})\|=\max_{V\in Gr(dim X, j)}|det
(D_{x}f^{n}|V)|,
\] 
where $Gr(dim X, j)$ (the grassmannian) is the set of all the subspace
of $T_{x}X$ of dimension $j$. 

\begin{theorem} Let $f:X\rightarrow X$ be a
$\mathcal{C}^{\infty}$ map of a smooth compact manifold $X$.
Then, for any continuous function $\gamma \in C_{\real}(X)$ we have:
\begin{enumerate}
\item For any closed subset $K$ of $\mathcal{P}(X)$
\[
\limsup_{n\rightarrow \infty}\frac{1}{n}\log
\frac{\int_{X} e^{n\int \gamma d\delta_{n}(x)}
1_{(\delta_{n}(x) \in K)}l_{n}(dx)}
{\int_{X} e^{n\int \gamma d\delta_{n}(x)}l_{n}(dx)}
\leq -J(K).
\]
\item For any open neighborhood $V$ of $\mathcal{P}_{e}(\gamma)$ we
  have,
\[
\lim_{n\rightarrow \infty}
\frac{\int_{X} e^{n\int \gamma d\delta_{n}(x)}
1_{(\delta_{n}(x) \in V)}l_{n}(dx)}
{\int_{X} e^{n\int \gamma d\delta_{n}(x)}l_{n}(dx)}=1,
\]where the convergence is exponential with speed $e^{-nJ(V^{c})}$.
\item The weak limits of
\[
\mu_{n}:= \frac{\int_{X} 
e^{n\int \gamma d\delta_{n}(x)}\delta_{n}(x)l_{n}(dx)}
{\int_{X}e^{n\int \gamma d\delta_{n}(x)}l_{n}(dx)}
\]
are equilibrium states corresponding to the potential $\gamma$,
i.e any weak limit $\mu_{\infty}$ is an invariant probability measure
and satisfies 
\[
h(\mu_{\infty})+\int_{X}\gamma d\mu_{\infty}=P(\gamma).
\]
\item Assume A. If for all $\beta \in \real^{d}$ and $g=(g_{1},\ldots,
  g_{d})\in \mathcal{C}^{d}$, $\gamma+\beta \cdot g$ has a unique 
equilibrium state, then for any open subset $O$ of $\mathcal{P}(X)$
\[
\liminf_{n\rightarrow \infty}\frac{1}{n}\log
\frac{\int_{X} e^{n\int \gamma d\delta_{n}(x)}
1_{(\delta_{n}(x) \in O)}l_{n}(dx)}
{\int_{X} e^{n\int \gamma d\delta_{n}(x)}l_{n}(dx)}
\geq -J(O).
\]
\end{enumerate}
\end{theorem}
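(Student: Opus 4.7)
My plan is to deduce the infinite-dimensional lower bound from a finite-dimensional large deviation lower bound, exploiting Assumption A so that the G\"artner--Ellis theorem applies on $\real^{d}$ for every finite selection $g=(g_{1},\ldots,g_{d})$ from $\mathcal{C}$, and then to transfer the bound to $\mathcal{P}(X)$ by a cylinder approximation built from $\mathcal{C}$.

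Fix $g=(g_{1},\ldots,g_{d})\in\mathcal{C}^{d}$ and let $\pi_{g}:\mathcal{P}(X)\to\real^{d}$ be the evaluation map $\pi_{g}(\mu)=(\int g_{i}\,d\mu)_{i=1}^{d}$. Push $\nu_{n}$ forward to $\nu_{n}^{g}:=(\pi_{g})_{*}\nu_{n}$; its normalized logarithmic moment generating function is
\[
\Lambda_{n}^{g}(\beta)=\frac{1}{n}\log\frac{\int_{X}e^{n\int(\gamma+\beta\cdot g)\,d\delta_{n}(x)}\,l_{n}(dx)}{\int_{X}e^{n\int\gamma\,d\delta_{n}(x)}\,l_{n}(dx)}.
\]
Applying Kozlovski's formula (Theorem $3$) to both numerator and denominator yields
\[
\Lambda_{n}^{g}(\beta)\longrightarrow P(\gamma+\beta\cdot g)-P(\gamma)=Q_{\gamma}(\beta\cdot g)=:\Lambda^{g}(\beta),
\]
which is convex and finite on all of $\real^{d}$. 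Under Assumption A, $\gamma+\beta\cdot g$ has a unique equilibrium state $\mu_{\beta}$; combined with upper semicontinuity of the entropy (Newhouse) and the variational principle this identifies the subdifferential $\partial\Lambda^{g}(\beta)$ with the singleton $\{\pi_{g}(\mu_{\beta})\}$, so $\Lambda^{g}$ is differentiable and hence essentially smooth. G\"artner--Ellis then gives, for every open $U\subset\real^{d}$,
\[
\liminf_{n\to\infty}\frac{1}{n}\log\nu_{n}^{g}(U)\geq-\inf_{y\in U}I^{g}(y),\qquad I^{g}:=(\Lambda^{g})^{*}.
\]

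I then pass to the infinite-dimensional picture. The contraction inequality
\[
I^{g}(\pi_{g}(\mu))=\sup_{\beta\in\real^{d}}\Bigl(\int(\beta\cdot g)\,d\mu-Q_{\gamma}(\beta\cdot g)\Bigr)\leq\sup_{\omega\in C_{\real}(X)}\Bigl(\int\omega\,d\mu-Q_{\gamma}(\omega)\Bigr)=J_{\gamma}(\mu)
\]
shows that the finite-dimensional rate dominates $J_{\gamma}$ on the image of $\pi_{g}$. Given an open $O\subset\mathcal{P}(X)$ and $\mu_{0}\in O$, density of $\mathrm{span}(\mathcal{C})$ in $C_{\real}(X)$ for the uniform norm implies that cylinder sets of the form $\pi_{g}^{-1}(B_{\eta}(\pi_{g}(\mu_{0})))$ with $g\in\mathcal{C}^{d}$ and $\eta>0$ form a neighborhood base of $\mu_{0}$ for the weak-$*$ topology, so I can pick such a cylinder $V\subset O$. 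Then $\nu_{n}(O)\geq\nu_{n}^{g}(B_{\eta}(\pi_{g}(\mu_{0})))$, and together with the finite-dimensional lower bound and the contraction inequality,
\[
\liminf_{n\to\infty}\frac{1}{n}\log\nu_{n}(O)\geq-I^{g}(\pi_{g}(\mu_{0}))\geq-J_{\gamma}(\mu_{0}).
\]
Taking the infimum over $\mu_{0}\in O$ yields the claim.

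The main obstacle is the differentiability of $\Lambda^{g}$ at every $\beta$: convexity and finiteness are routine, but I must argue that uniqueness of the equilibrium state for $\gamma+\beta\cdot g$ collapses the subdifferential of $\beta\mapsto P(\gamma+\beta\cdot g)$ to a single point. This rests on identifying that subdifferential with the convex hull of $\{\pi_{g}(\mu):\mu\in\mathcal{P}_{e}(\gamma+\beta\cdot g)\}$, an argument that uses precisely the upper semicontinuity of the entropy map and is where the $C^{\infty}$ hypothesis on $f$ (via Newhouse) enters in an essential way. A secondary routine point is checking that Kozlovski's formula applies with potential $\gamma+\omega$ for every continuous $\omega$; this follows directly from the pressure form of the formula.
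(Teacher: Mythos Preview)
Your proposal addresses only part~(4); the paper establishes (1) first by a direct covering/Chebyshev argument using the open sets $\{m:\int\omega_{i}\,dm-Q(\omega_{i})>J(K)-\epsilon\}$ together with Kozlovski's formula, and (2)--(3) are then deduced from (1) and Lemma~1 without Assumption~A.

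For part~(4) your argument is correct and shares the paper's overall architecture: reduce to $\real^{d}$ via cylinder neighborhoods built from the dense family $\mathcal{C}$, prove a finite-dimensional lower bound, and lift it through the contraction inequality $I^{g}\circ\pi_{g}\le J_{\gamma}$. The finite-dimensional step is packaged differently, however. You invoke G\"artner--Ellis as a black box, with uniqueness of the equilibrium state for $\gamma+\beta\cdot g$ entering to make $\Lambda^{g}$ differentiable (so every point of finite rate is exposed). The paper instead proves its contraction principle (Theorem~2(2)) by an explicit tilt: it selects $\beta_{\varepsilon}$ via Rockafellar so that $Q_{\gamma}(\beta_{\varepsilon}\cdot g)=\beta_{\varepsilon}\cdot\alpha_{\varepsilon}-J_{g}(\alpha_{\varepsilon})$, shifts the reference potential to $\gamma+\beta_{\varepsilon}\cdot g$, and then \emph{reuses the already-proved upper bound} (part~(1)) to show that the tilted mass outside a small ball around $\alpha_{\varepsilon}$ decays exponentially; uniqueness enters there to guarantee that $J_{g}^{\varepsilon}$ has $\alpha_{\varepsilon}$ as its sole zero, hence strictly positive infimum on the complement. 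Your route is shorter and more modular; the paper's is self-contained and makes explicit why parts~(1)--(3) are established first and then fed back into the proof of~(4).
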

The upper bound $(1)$ can be interpreted as a measurement of the growth of the
$l_{n}$-volume of certain sets as follows. Let us write it for $\gamma =0$ for
simplicity: for $n$ sufficiently large we get from $(1)$,
\[
e^{-nl_{n}(X)} l_{n}(\{x\in X: \delta_{n}(x) \in K\})\leq 
e^{-nJ_{0}(K)}
\]
where $J_{0}$ is the functional $J_{\gamma}$ corresponding to $\gamma
=0$. Recall from $(4)$ that in this case we have for any probability
measure $m$ on $X$, $J_{0}(m)=\sup_{\omega}(\int_{X}\omega dm- P(\omega))+
h_{top}$. Furthermore, if $K$ contains an invariant probability
measure $\mu_{K}$ and $J_{0}$ achieves its minimum in $K$ at $\mu$,
$J_{0}(K)=J_{0}(\mu_{K})$, then by Lemma $1$ hereunder we will have
$J_{0}(\mu)=h_{top}-h(\mu_{K})$. Thus for $n$ sufficiently large we get,
\[
e^{-nl_{n}(X)}l_{n}(\{x\in X: \delta_{n}(x) \in K\})\leq
e^{-n(h_{top}-h(\mu_{K})}.
\]

Point ($3$) of this theorem will be deduced from the point ($1$)
and there is no need of {\it Assumption A}.  

Theorem $1$ apply in particular to the time-one map of the geodesic
flow of the unit tangent bundle of a
compact Riemannian manifold $M$. In this case, the weak limits of $\mu_{n}$ 
are equilibrium states of the geodesic flow
corresponding to the potential $\gamma$. 
If $M$ is a manifold of negative curvature, it is well known that for
any H\"older potential there exists a unique equilibrium
state, then the corresponding measures $\mu_{n}$ converge to this
state. There are three well known invariant measures in this 
setting. The Bowen-Margulis measure $\mu_{0}$, which is the equilibrium state
(a measure of maximal entropy) corresponding to the constant
potential. The harmonic measure $\nu$ which
corresponds to the potential $\frac{d}{dt}|_{t=0}(K\circ
\widetilde{\varphi}_{t})$  
where $K$ is the Poisson kernel and $\widetilde{\varphi}_{t}$ is the
geodesic flow of $S\widetilde{M}$. The  Liouville measure $m_{liou}$
which is the 
equilibrium state of the potential $\frac{d}{dt}|_{t=0} \det \left(
d\varphi_{t} |_{E^{s}}\right )$ where $E^{s}$ is the stable tangent
bundle of $SM$ (see \cite{ch} and \cite{che} for more details).
If $M$ is a rank 1 manifold (Riemannian manifolds of nonpositive
curvature), Knieper \cite{Kn} showed that there exists
a uniquely determined invariant measure of maximal entropy
$\mu_{\max}$ for the geodesic flow and then $\mu_{n}$ converges
towards $\mu_{\max}$. As a consequence of Theorem $1$ we deduce the
following result for the geodesic flow.

\begin{corollary} Let $M$ be a closed and connected manifold equipped
  with a $C^{\infty}$ Riemannian metric and $f\equiv \varphi_{1}$ the
 time-one map of the geodesic flow $\phi$ of the unit tangent bundle
 $X=SM$ of $M$. Let $\gamma \in C_{\real}(SM)$ be a continuous potential.
Then the weak limits of 
\[
\mu_{n}:= 
\frac{\int_{SM} e^{n\int \gamma d\delta_{n}(\theta)}
  \delta_{n}(\theta)l_{n}(d \theta)}
{\int_{SM} e^{n\int \gamma d\delta_{n}(\theta)} l_{n}(d \theta)}
\]
are equilibrium states for the
geodesic flow corresponding to the potential $\gamma$. Here $d\theta$
is the volume form induced by the Riemannian metric on the tangent
bundle $TM$ and $l_{n}(d \theta):=
\|\wedge(D_{\theta}\varphi_{1}^{n})\| d\theta$. 
\end{corollary}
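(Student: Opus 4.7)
My plan is to invoke Theorem~$1$ with $X = SM$ and $f = \varphi_1$, then upgrade the resulting $\varphi_1$-equilibrium statement to one for the full geodesic flow $\varphi$.

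Since $M$ is a closed $C^{\infty}$ Riemannian manifold, the unit tangent bundle $SM$ is a smooth compact manifold, the time-one map $\varphi_1 : SM \to SM$ is a $C^{\infty}$ diffeomorphism, and the Liouville volume $d\theta$ on $SM$ can be normalized to total mass one. Thus the hypotheses of Theorem~$1$ hold for this choice, and the measures $l_n(d\theta) = \|\wedge(D_\theta\varphi_1^n)\|\, d\theta$ of the corollary coincide with those of the preliminaries. Applying Theorem~$1$~(3) yields at once: every weak limit $\mu_\infty$ of the sequence $\mu_n$ is $\varphi_1$-invariant and satisfies
\[
h_{\varphi_1}(\mu_\infty) + \int_{SM}\gamma\, d\mu_\infty = P_{\varphi_1}(\gamma).
\]

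To conclude that $\mu_\infty$ is an equilibrium state for the geodesic flow itself I need two things: (i) the identities $P_\varphi(\gamma) = P_{\varphi_1}(\gamma)$ and $h_\varphi(\mu_\infty) = h_{\varphi_1}(\mu_\infty)$, and (ii) flow-invariance of $\mu_\infty$. Point (i) is classical: the topological pressure of a continuous flow equals that of its time-one map for continuous potentials (Bowen--Walters), and Abramov's theorem identifies $h_\varphi$ with $h_{\varphi_1}$ on any flow-invariant measure. For point (ii) I would exploit the fact that $d\theta$ is preserved by the geodesic flow (the flow is Hamiltonian), so the substitution $\theta \mapsto \varphi_s(\theta)$ fixes $d\theta$; the commutation $\varphi_1^n\circ\varphi_{-s} = \varphi_{-s}\circ\varphi_1^n$ together with the chain rule then shows that $\|\wedge(D_\theta\varphi_1^n)\|$ transforms by a factor bounded uniformly in $n$ under this substitution. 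Combining this with uniform continuity of $\gamma$ along short flow segments, the relative density $d((\varphi_s)_*\mu_n)/d\mu_n$ is subexponentially close to $1$, and passing to the limit forces $(\varphi_s)_*\mu_\infty = \mu_\infty$ for every $s \in \real$.

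The hard part will be (ii). After the change of variables the discrete Birkhoff sum becomes $\sum_{i=0}^{n-1}\gamma(\varphi_{i-s}\theta)$, and the crude estimate $|\sum_i(\gamma(\varphi_{i-s}\theta)-\gamma(\varphi_i\theta))| \leq n\|\gamma\circ\varphi_{-s}-\gamma\|_\infty$ grows linearly with $n$ and does not dissolve into a bounded Radon--Nikodym factor. The remedy I would pursue is to split $s = k + s_0$ with $k\in\intg$ and $s_0\in[0,1)$, eliminate the integer part via the already-established $\varphi_1$-invariance, and eliminate the fractional part by applying the large deviation upper bound Theorem~$1$~(1) to a small weak-open neighborhood separating $(\varphi_{s_0})_*\mu_\infty$ from $\mu_\infty$ (assuming for contradiction that they differ). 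The exponential-scale asymmetry introduced by the modulus of continuity of $\gamma$ is $o(n)$ once $s_0$ is chosen small enough, and is killed by the strictly negative rate $-J(V^c)$.
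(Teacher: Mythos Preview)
The paper offers no separate proof of the corollary; it is stated immediately after the remark ``Theorem~$1$ appl[ies] in particular to the time-one map of the geodesic flow\ldots'' and is treated as a direct specialization of Theorem~$1\,(3)$ with $X=SM$ and $f=\varphi_1$. Your first paragraph reproduces exactly this, so on that level you agree with the paper.

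Where you depart from the paper is in taking the phrase ``equilibrium states for the geodesic flow'' literally and trying to upgrade the conclusion of Theorem~$1\,(3)$ (a $\varphi_1$-equilibrium state) to a genuine \emph{flow}-equilibrium state. The paper simply does not address this point. Your instinct that something is being glossed over is correct: a $\varphi_1$-equilibrium state need not be flow-invariant, and the two pressures need not coincide.

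That said, your proposed upgrade has real gaps. First, the identity $P_\varphi(\gamma)=P_{\varphi_1}(\gamma)$ is not a general theorem; what one has is $P_\varphi(\gamma)=P_{\varphi_1}(\tilde\gamma)$ with $\tilde\gamma(\theta)=\int_0^1\gamma(\varphi_s\theta)\,ds$, and $P_{\varphi_1}(\gamma)$ can be strictly larger. Likewise Abramov's formula identifies $h_\varphi$ with $h_{\varphi_1}$ only on \emph{flow}-invariant measures, so you cannot invoke it before (ii) is settled. Second, in your argument for (ii) the Birkhoff-sum discrepancy $\sum_{i=0}^{n-1}\bigl(\gamma(\varphi_{i-s_0}\theta)-\gamma(\varphi_i\theta)\bigr)$ is bounded by $n\,\omega_\gamma(s_0)$, which is $O(n)$ with a small constant, not $o(n)$; to beat the rate $-J(V^c)$ you would need $\omega_\gamma(s_0)<J(V^c)$, but $V$ is chosen to separate $(\varphi_{s_0})_*\mu_\infty$ from $\mu_\infty$ and hence $J(V^c)$ depends on $s_0$ as well, so the smallness requirements are circular. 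Your reduction to $s_0\in[0,1)$ via $\varphi_1$-invariance does not help, since $s_0$ is then fixed rather than arbitrarily small.

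In summary: your first step is exactly what the paper does (and all that it does); your further steps attack a genuine issue the paper leaves open, but the argument as written does not close it.
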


\subsection{Contraction Principle}
For $g\in C_{\real^{d}}(X)$ and $\alpha \in \real^{d}$ we set
$m(g):=\int gdm$ and
\[
\mathcal{P}_{g,\alpha}(X):=\{m\in \mathcal{P}(X):
m(g)=\alpha \}.
\]
We define the functionals:
\begin{equation*}
J_{g}(\alpha)=\left\{
          \begin{array}{ll}\inf (J(m):m\in \mathcal{P}_{g,\alpha}(X))&
  \quad \mathrm{if} \quad \mathcal{P}_{g,\alpha}(X)\ne
  \emptyset \\ 
 +\infty & \quad \mathrm{if} \quad
  \mathcal{P}_{g,\alpha}(X)=\emptyset \\          
          \end{array}
        \right.
\end{equation*}
and $J_{g}(E_{d})=\inf \left ( J_{g}(\alpha): \alpha \in E_{d}\right )$ 
for any $E_{d} \subset \real^{d}$. The following result is known as a
contraction principle \cite{DZ}. 
\begin{theorem}[Contraction Principle]
Let $f:X\rightarrow X$ a
$\mathcal{C}^{\infty}$ map of a smooth compact manifold $X$.
Let $\gamma :X \rightarrow\real$ be a continuous
potential and $g\in C_{\real^{d}}(X)$. Then,
\begin{enumerate}
\item For any closed subset $K_{d} \subset \real^{d}$,
\[
\limsup_{n\rightarrow \infty}\frac{1}{n}\log
\frac{\int_{X}e^{n\int \gamma d\delta_{n}(x)}
1_{(\delta_{n}(x)(g) \in K_{d})}l_{n}(dx)}
{\int_{X}e^{n\int \gamma d\delta_{n}(x)}l_{n}(dx)}
\leq -J_{g}(K_{d}).
\]
\item  If for all $\beta \in \real^{d}$, $\gamma+\beta \cdot g$ 
has a unique equilibrium state, then for any open subset 
$O_{d}\subset \real^{d}$, 
\[
\liminf_{n\rightarrow \infty}\frac{1}{n}\log
\frac{\int_{X}e^{n\int \gamma d\delta_{n}(x)}
1_{(\delta_{n}(x)(g) \in O_{d})}l_{n}(dx)}
{\int_{X}e^{n\int \gamma d\delta_{n}(x)}l_{n}(dx)}
\geq -J_{g}(O_{d}).
\]
\end{enumerate}
\end{theorem}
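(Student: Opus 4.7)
The plan is to treat Theorem 2 as a finite-dimensional projection of the infinite-dimensional large deviation framework in Theorem 1, with the lower bound supplied by the Gärtner-Ellis theorem applied to the logarithmic moment generating function that Kozlovski's formula (Theorem 3) makes accessible.

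\textbf{Upper bound (1).} I would introduce the weak-$*$ continuous evaluation $F:\mathcal{P}(X)\to \real^{d}$, $F(m):=m(g)=(\int g_{1}dm,\dots,\int g_{d}dm)$. For a closed $K_{d}\subset\real^{d}$ the set $F^{-1}(K_{d})$ is closed in $\mathcal{P}(X)$, and the indicator $1_{\delta_{n}(x)(g)\in K_{d}}$ is the same as $1_{\delta_{n}(x)\in F^{-1}(K_{d})}$. Theorem~1$(1)$ applied to $F^{-1}(K_{d})$ therefore yields the asserted bound with exponent $-J(F^{-1}(K_{d}))$, and directly from the definitions
\[
J(F^{-1}(K_{d}))=\inf\{J(m):\,m(g)\in K_{d}\}=\inf_{\alpha\in K_{d}}\inf_{m(g)=\alpha}J(m)=J_{g}(K_{d}).
\]

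\textbf{Lower bound (2).} Denote by $\nu_{n}^{g}$ the push-forward onto $\real^{d}$ of the probability $d\mathbf{P}_{n}(x):=e^{n\delta_{n}(x)(\gamma)}l_{n}(dx)/\int e^{n\delta_{n}(x)(\gamma)}l_{n}(dx)$ under $x\mapsto \delta_{n}(x)(g)$. Its normalized log-moment generating function is
\[
\Lambda_{n}(\beta):=\frac{1}{n}\log\int_{\real^{d}}e^{n\beta\cdot y}d\nu_{n}^{g}(y)=\frac{1}{n}\log\frac{\int e^{n\delta_{n}(x)(\gamma+\beta\cdot g)}l_{n}(dx)}{\int e^{n\delta_{n}(x)(\gamma)}l_{n}(dx)}.
\]
Kozlovski's formula gives $\Lambda_{n}(\beta)\to \Lambda(\beta):=P(\gamma+\beta\cdot g)-P(\gamma)=Q_{\gamma}(\beta\cdot g)$ pointwise. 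The limit $\Lambda$ is convex and finite on all of $\real^{d}$, hence automatically steep. Under the hypothesis that $\gamma+\beta\cdot g$ admits a unique equilibrium state $m_{\beta}$ for each $\beta$, $\Lambda$ is differentiable on $\real^{d}$ with $\nabla\Lambda(\beta)=m_{\beta}(g)$: the subdifferential of $\omega\mapsto Q_{\gamma}(\omega)$ is exactly $\{m(g):m\in\mathcal{P}_{e}(\gamma+\omega)\}$ by the variational principle combined with upper semicontinuity of the entropy map (Newhouse), so uniqueness collapses it to a single point. The Gärtner-Ellis theorem (see~\cite{DZ}) now supplies the large deviation lower bound for $\nu_{n}^{g}$ with rate function $\Lambda^{*}(\alpha)=\sup_{\beta}(\beta\cdot\alpha-\Lambda(\beta))$.

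\textbf{Identification $\Lambda^{*}=J_{g}$.} Using the duality formula~(5) I can rewrite $\Lambda(\beta)=\sup_{m\in\mathcal{P}(X)}(\beta\cdot m(g)-J(m))$, so
\[
\Lambda^{*}(\alpha)=\sup_{\beta}\bigl(\beta\cdot\alpha-\sup_{m}(\beta\cdot m(g)-J(m))\bigr)=\inf_{m:\,m(g)=\alpha}J(m)=J_{g}(\alpha),
\]
the middle equality being a standard min-max/Fenchel dualization on the compact convex set $\mathcal{P}(X)$, where $J$ is convex and lower semicontinuous; when $\{m:m(g)=\alpha\}$ is empty the inner supremum in $\beta$ is $+\infty$, consistent with the definition $J_{g}(\alpha)=+\infty$. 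Combining this with the Gärtner-Ellis conclusion finishes the lower bound.

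\textbf{Main obstacle.} Everything reduces to one nontrivial input: the differentiability of $\Lambda$ at every $\beta\in\real^{d}$ under uniqueness of the equilibrium state for $\gamma+\beta\cdot g$. This is precisely where the uniqueness hypothesis enters and where the proof must use the structural properties of $P$ --- the variational principle, upper semicontinuity of $h$, and convexity --- to identify the subdifferential of $Q_{\gamma}$ with the set of tangent equilibrium measures. The rest (continuity of $F$, the Gärtner-Ellis machinery, and the Fenchel identification of $\Lambda^{*}$ with $J_{g}$) is essentially formal.
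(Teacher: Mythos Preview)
Your treatment of part~(1) coincides with the paper's: both note that $\{\delta_{n}(x)(g)\in K_{d}\}=\{\delta_{n}(x)\in F^{-1}(K_{d})\}$ for the weak-$*$ continuous map $F(m)=m(g)$, apply Theorem~1(1) to the closed set $F^{-1}(K_{d})$, and read off $J(F^{-1}(K_{d}))=J_{g}(K_{d})$.

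For part~(2) the routes genuinely differ. The paper does \emph{not} invoke G\"artner--Ellis; it carries out the exponential tilting by hand. Given $\alpha_{\varepsilon}\in O_{d}$ nearly realizing $J_{g}(O_{d})$, it uses Rockafellar's theorem to select $\beta_{\varepsilon}$ with $Q_{\gamma}(\beta_{\varepsilon}\cdot g)=\beta_{\varepsilon}\cdot\alpha_{\varepsilon}-J_{g}(\alpha_{\varepsilon})$, inserts the factor $e^{n\delta_{n}(x)(\beta_{\varepsilon}\cdot g)}$, and reduces the problem to showing that the \emph{tilted} mass of a small ball $O_{d,r}$ around $\alpha_{\varepsilon}$ tends to~$1$. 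That last step is obtained by applying Theorem~1(1) again, now with potential $\gamma+\beta_{\varepsilon}\cdot g$, and the uniqueness hypothesis is used exactly to force $J^{\varepsilon}_{g}(\alpha)>0$ for $\alpha\neq\alpha_{\varepsilon}$, so the complement has strictly negative exponential rate. Your argument packages the same mechanism into the G\"artner--Ellis theorem: Kozlovski's formula yields the pointwise limit $\Lambda(\beta)=Q_{\gamma}(\beta\cdot g)$, uniqueness of equilibrium states makes $\Lambda$ differentiable (hence essentially smooth, since its domain is all of $\real^{d}$), and the identification $\Lambda^{*}=J_{g}$ follows because $J_{g}$ is the infimal projection of the convex lsc $J$ along $m\mapsto m(g)$ from the compact set $\mathcal{P}(X)$, hence is itself convex and lsc, so $J_{g}=J_{g}^{**}=\Lambda^{*}$. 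Your approach is shorter and makes the link to standard large-deviations machinery transparent; the paper's is more self-contained and has the structural virtue of showing that the lower bound in Theorem~2 needs only the \emph{upper} bound of Theorem~1 as input, not an external LDP theorem.
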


\section{Proofs}
\subsection{Preliminaries}
For any invariant probability measure $\mu$ we set
\begin{equation}
I(\mu):=P(\gamma)-(h(\mu)+\int_{SM} \gamma d\mu).
\end{equation}
\begin{lemma} 
\begin{enumerate}
\item $Q_{\gamma}$ is $f$-invariant, that is
$Q_{\gamma}(\omega \circ f)=Q_{\gamma}(\omega)$ for all
continuous function $\omega$. Moreover,
$Q_{\gamma}$ is convex and continuous on continuous functions.  
\item $Q_{\gamma}(\omega)=\sup_{\mu \in \mathcal{P}_{inv}(SM)}(\int \omega
  d\mu-I(\mu))$. In other words, the functionals $I$ and $J_{\gamma}$
agree on invariant measures.
\item $\mathcal{P}_{e}(\gamma)=\{J_{\gamma}=0\}$, that is $J_{\gamma}$
vanishes on a probability measure $\mu$ if and only if $\mu$ is an
equilibrium state for $f$.
\end{enumerate}
\end{lemma}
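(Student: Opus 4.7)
The plan is to attack the three items in the order they are stated, exploiting the variational principle (2) together with upper semicontinuity of the entropy map (which the paper already invokes from Newhouse). All three parts reduce to elementary manipulations once the right duality is set up.

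For \textbf{(1)}, I would start from the definition $Q_{\gamma}(\omega)=P(\gamma+\omega)-P(\gamma)$. The $f$-invariance $Q_{\gamma}(\omega\circ f)=Q_{\gamma}(\omega)$ is immediate from the variational principle: for every invariant $\mu$ one has $\int\omega\circ f\,d\mu=\int\omega\,d\mu$, so $P(\gamma+\omega\circ f)=P(\gamma+\omega)$. Convexity of $Q_{\gamma}$ follows from the general fact that $\omega\mapsto P(\gamma+\omega)$, being a supremum over $\mathcal{P}_{inv}(X)$ of the affine functionals $\mu\mapsto h(\mu)+\int(\gamma+\omega)d\mu$, is convex. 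Continuity (in fact $1$-Lipschitz in the uniform norm) follows from the elementary bound $|P(\phi_{1})-P(\phi_{2})|\le\|\phi_{1}-\phi_{2}\|_{\infty}$, again a direct consequence of the variational principle applied on each side.

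For \textbf{(2)}, the first identity is a rearrangement: for invariant $\mu$,
\[
\int\omega\,d\mu-I(\mu)=h(\mu)+\int(\gamma+\omega)\,d\mu-P(\gamma),
\]
and taking the sup over $\mu\in\mathcal{P}_{inv}(X)$ yields $P(\gamma+\omega)-P(\gamma)=Q_{\gamma}(\omega)$. The identity $J_{\gamma}=I$ on invariant measures is the main point. One direction, $J_{\gamma}(\mu)\le I(\mu)$, is just evaluation of the sup in (4) at one test function. For the reverse, I would substitute $\tilde\omega:=\gamma+\omega$ inside the supremum defining $J_{\gamma}$, reducing the question to
\[
\sup_{\tilde\omega\in C_{\real}(X)}\Bigl(\int\tilde\omega\,d\mu-P(\tilde\omega)\Bigr)=-h(\mu),
\]
which is precisely the dual form of the variational principle; this dual equality uses upper semicontinuity of the entropy map and is where Newhouse's theorem is used. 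For a non-invariant $\mu$, one picks a $\phi$ with $c:=\int(\phi\circ f-\phi)d\mu\ne 0$; by part (1) (or just because pressure is coboundary-invariant) one has $Q_{\gamma}(t(\phi\circ f-\phi))=0$ for every $t\in\real$, while $\int t(\phi\circ f-\phi)d\mu=tc$, so letting $|t|\to\infty$ forces $J_{\gamma}(\mu)=+\infty$. This clean dichotomy feeds directly into (3).

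For \textbf{(3)}, combine the above: if $\mu$ is not invariant then $J_{\gamma}(\mu)=+\infty\ne 0$, while if $\mu$ is invariant then $J_{\gamma}(\mu)=I(\mu)\ge 0$ by the variational principle, with equality iff $\mu$ is an equilibrium state. This gives $\mathcal{P}_{e}(\gamma)=\{J_{\gamma}=0\}$.

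The only non-routine step is the dual variational formula $h(\mu)=\inf_{\omega}(P(\omega)-\int\omega\,d\mu)$ used in part (2); everything else is bookkeeping around the variational principle and the coboundary invariance of pressure. I expect that to be the main obstacle, and the one where the smoothness hypothesis (via Newhouse's upper semicontinuity of $h$) is really needed.
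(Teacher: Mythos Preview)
Your approach is essentially the paper's: both reduce everything to the variational principle, compute $Q_\gamma$ as the Legendre transform of $I$ over invariant measures, and read off part (3) from part (2). Your write-up is in fact more complete than the paper's rather sketchy argument---the paper simply asserts that $I$ and $J_\gamma$ agree on invariant measures without isolating the dual entropy formula $h(\mu)=\inf_\omega\bigl(P(\omega)-\int\omega\,d\mu\bigr)$ (your ``non-routine step'', where upper semicontinuity enters), and it never explicitly treats non-invariant $\mu$, which you dispatch via the coboundary trick.
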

\begin{proof}
The fact that $Q_{\gamma}$ is $f$-invariant follows from its
definition $(3)$.
Part $(2)$ is a consequence of the convexity of the pressure function
$P$ and the variational principle $(1)$ from which we can
easily deduce that $|P(f)-P(g)|\leq \|f-g\|_{\infty}$ \cite{Wa}. 
Part $(2)$ follows from $(7)$ and,
\begin{eqnarray*}
&&\sup_{\mu \in \mathcal{P}_{inv}(X)}(\int \omega  d\mu-I(\mu))\\
&=& \sup_{\mu \in \mathcal{P}_{inv}(X)}(\int \omega  d\mu-
P(\gamma )+h(\mu)+\int \gamma d\mu)\\
&=&P(\gamma+\omega)-P(\gamma)=Q_{\gamma}(\omega).
\end{eqnarray*}
Thus, a probability measure $m$
satisfies $J_{\gamma}(m)=0$ if and only if $m$ is invariant and
$I(m)=0$. 
On the other hand, by definition and the continuity of $Q_{\gamma}$, the
functional $J_{\gamma}$ is lower semicontinuous. 
Thus, if $K$ is a closed subset of $\mathcal{P}(X)$ we have
$\inf_{m\in K}J_{\gamma}(m)=J_{\gamma}(\mu)$ for some $\mu \in K$. 
Then $\inf_{m\in K}J_{\gamma}(m)=0$ iff $\mu \in K$ is invariant and
$h(\mu)+\int 
\gamma d\mu=P(\gamma)$. In other words, by Lemma $1$ $(2)$, we have
$\mathcal{P}_{e}(\gamma)=\{J_{\gamma}=0\}$. 
\end{proof}
The proofs below are based on a formula of Kozlovski \cite{Koz} which
asserts that the topological pressure for a $\mathcal{C}^{\infty}$ map
of a smooth compact manifold is given by the exponential growth of the
mapping $\wedge(D_{x}f^{n})$ between full exterior algebras of the
tangent spaces.   
\begin{theorem}[O. S. Kozlovski] Let $f:X\rightarrow X$ be a
$\mathcal{C}^{\infty}$ map of a smooth compact 
manifold $X$. Then, the
topological pressure $P(\gamma)$ of a potential $\gamma \in
C_{\real}(X)$ is given by  
\begin{equation}
P(\gamma)=\lim_{n\rightarrow \infty}\frac{1}{n}\log \int_{X}
e^{n\int \gamma d\delta_{n}(x)}l_{n}(dx).
\end{equation}
\end{theorem}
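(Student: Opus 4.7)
The plan is to prove Kozlovski's formula by establishing the two matching inequalities
\[
P(\gamma) \;\leq\; \liminf_{n\to\infty} \tfrac{1}{n}\log Z_n(\gamma)
\;\leq\; \limsup_{n\to\infty}\tfrac{1}{n}\log Z_n(\gamma) \;\leq\; P(\gamma),
\]
where I write $Z_n(\gamma):=\int_X e^{n\int \gamma\, d\delta_n(x)}\,l_n(dx)$ and $S_n\gamma(x):=\sum_{k=0}^{n-1}\gamma(f^k x)$ so that $n\int\gamma\,d\delta_n(x) = S_n\gamma(x)$. The two directions rest on fundamentally different ingredients: Newhouse's covering argument for the upper bound, and Yomdin's $C^\infty$ reparametrization machinery for the lower bound.

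For the upper bound $\limsup_n \tfrac{1}{n}\log Z_n(\gamma) \leq P(\gamma)$, I would proceed in a Newhouse-style covering argument. Fix $\epsilon>0$ and a maximal $(n,\epsilon)$-separated set $\{x_i\}$, so that the Bowen balls $B_n(x_i,\epsilon)=\{y: d(f^k x_i, f^k y)<\epsilon, 0\le k<n\}$ cover $X$. On each $B_n(x_i,\epsilon)$ we have $|S_n\gamma(y)-S_n\gamma(x_i)|\le n\,\mathrm{osc}(\gamma,\epsilon)$, so
\[
Z_n(\gamma) \;\leq\; e^{n\,\mathrm{osc}(\gamma,\epsilon)} \sum_i e^{S_n\gamma(x_i)} \int_{B_n(x_i,\epsilon)} \|\wedge(D_y f^n)\|\,dy.
\]
The key geometric estimate is that $\int_{B_n(x_i,\epsilon)} \|\wedge(D_y f^n)\|\,dy \le C(\epsilon)$ uniformly in $n$, because the integrand is dominated by the sum over $j$ of $j$-volumes of images of $j$-dimensional slices of a Bowen ball, and these images remain confined inside a ball of radius $\epsilon$ in $X$ by the defining property of $B_n(x_i,\epsilon)$. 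Summing over $i$ and recognizing $\sum_i e^{S_n\gamma(x_i)}$ as the $(n,\epsilon)$-partition function for the pressure, we obtain $\limsup_n \tfrac{1}{n}\log Z_n(\gamma)\le P(\gamma,\epsilon)+\mathrm{osc}(\gamma,\epsilon)$, and then let $\epsilon\to 0$.

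For the lower bound $P(\gamma)\leq \liminf_n \tfrac{1}{n}\log Z_n(\gamma)$, the plan is to use Yomdin's reparametrization lemma, which is the delicate part and genuinely needs the $C^\infty$ hypothesis. Fix $\delta>0$ and choose an invariant measure $\mu$ with $h(\mu)+\int\gamma\,d\mu \ge P(\gamma)-\delta$, together with a partition of $X$ whose $(n,\epsilon)$-refinement realizes this entropy up to $\delta$ via Katok's formula. Yomdin's lemma allows one to decompose $X$ into a controlled number $N_n$ of pieces on each of which $f^n$ is, after reparametrization, uniformly $C^r$-bounded, with $\tfrac{1}{n}\log N_n$ controlled by the exponential growth of $\|\wedge(D f^n)\|$ plus a sub-exponential Yomdin error (which is the part consuming the smoothness). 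On each such piece the $(n,\epsilon)$-dynamical complexity is uniformly bounded, so an $(n,\epsilon)$-separated set $E_n$ of cardinality $\approx e^{n(h(\mu)-\delta)}$ with $\tfrac{1}{n}S_n\gamma(x)\approx\int\gamma\,d\mu$ on $E_n$ must meet many distinct pieces, and one converts this into the lower bound
\[
\sum_{x\in E_n} e^{S_n\gamma(x)} \;\leq\; C\cdot e^{n\,o_{\epsilon,r}(1)}\, Z_n(\gamma).
\]
Taking $\tfrac{1}{n}\log$, sending $n\to\infty$ and then $\epsilon\to 0$, $r\to\infty$, $\delta\to 0$ gives $h(\mu)+\int\gamma\,d\mu\le \liminf_n \tfrac{1}{n}\log Z_n(\gamma)$, and the variational principle finishes the bound.

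The main obstacle is the lower bound: all the analytic work lives in Yomdin's reparametrization theorem and its quantitative tracking of how $C^r$-norms of iterates interact with volume growth $\|\wedge(Df^n)\|$. The upper bound, by contrast, is a fairly direct covering argument once one has the uniform estimate on the integral of $\|\wedge(Df^n)\|$ over a Bowen ball; nothing beyond $C^1$ is needed there. It is precisely because the lower bound's Yomdin error is $o(1)$ only under the $C^\infty$ assumption that the formula is stated in the $C^\infty$ category.
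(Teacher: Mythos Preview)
First, note that the paper does not give a proof of this theorem at all: Theorem~3 is quoted from Kozlovski's paper \cite{Koz} and used as a black box in the proofs of Theorem~1 and Theorem~2. There is therefore no proof in the paper to compare your attempt against; what you have written is a proposed reconstruction of Kozlovski's original argument.

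Regarding that reconstruction, there is a genuine gap: you have interchanged the roles of the two main tools. In your upper bound argument, the crucial claim is that
\[
\int_{B_n(x_i,\epsilon)} \|\wedge(D_y f^n)\|\,dy \;\le\; C(\epsilon)
\]
uniformly in $n$, justified by saying that images of $j$-dimensional slices of the Bowen ball stay inside an $\epsilon$-ball. But a $j$-dimensional submanifold contained in a ball of radius $\epsilon$ can have arbitrarily large $j$-volume if it is highly wrinkled, and iterates of finitely differentiable maps can produce exactly this kind of wrinkling. If your estimate were valid for $C^1$ maps, as you assert, the whole formula would hold in the $C^1$ category, contradicting the known examples where volume growth strictly exceeds topological entropy for $C^r$ maps with $r<\infty$. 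It is precisely the inequality $\limsup_n \tfrac{1}{n}\log Z_n(\gamma)\le P(\gamma)$ that requires Yomdin's $C^\infty$ reparametrization lemma, which controls the complexity of iterated images and prevents this wrinkling from contributing super-entropic growth.

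Conversely, the lower bound $P(\gamma)\le\liminf_n \tfrac{1}{n}\log Z_n(\gamma)$ is the direction that does \emph{not} need $C^\infty$: it follows from Przytycki--Newhouse type estimates (topological entropy is bounded above by volume growth), valid already in low regularity. Your invocation of Yomdin for this half is misplaced. So while you have correctly identified the two ingredients, the assignment of each to its inequality is reversed, and the concrete Bowen-ball integral estimate you rely on is false.
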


\subsection{Proof of Theorem 1 ($1$)}
\begin{proof}
Set for any subset $E$ of $\mathcal{P}(X)$,
\begin{equation}
\nu_{n}(E):=
\frac{\int_{X}e^{n\int \gamma d\delta_{n}(x)}
1_{(\delta_{n}(x) \in E)}l_{n}(dx)}
{\int_{X} e^{n\int \gamma d\delta_{n}(x)}l_{n}(dx)}.
\end{equation}
We have to prove
\[
\limsup_{n\rightarrow \infty}\frac{1}{n}\log \nu_{n}(K) \leq
-\inf_{m\in K}J(m):=-J(K)
\]for any closed subset $K$.
Let $\epsilon >0$. Observe that the set $K$ is contained the union of
open sets,
\[
K\subset \cup_{\omega}\{\mu \in \mathcal{P}(X) :\int
\omega dm-Q(\omega)>J(K)-\epsilon\}.
\]
There exists then a finite number of continuous
functions $\omega_{1}, \cdots, \omega_{l}$ such that $K\subset
\cup_{i=1}^{l}K_{i}$, where 
\[
K_{i}=\{m\in \mathcal{P}(X): \int \omega_{i}dm-Q(\omega_{i})>J(K)-\epsilon\}.
\]
Put
\[
\Gamma_{i}(x,n):=\{x\in X:\delta_{n}(x)\in K_{i}\}, \ and \
Z_{i}(n):=\int_{\Gamma_{i}(x,n)} 
e^{n\int \gamma d\delta_{n}(x)}l_{n}(dx)
\]
We have $\nu_{n}(K) \leq \sum_{i=1}^{l}\nu_{n}(K_{i})$, where
$\nu_{n}(K_{i})=\frac{Z_{i}(n)}
{\int_{X} e^{n\int \gamma d\delta_{n}(x)}l_{n}(dx)}$.
By definition of $Q$ and $\Gamma_{i}(x,n)$, and Theorem $3$ we have for
$n$ sufficiently large,
\begin{eqnarray*}
&&\nu_{n}(K)\\
&\leq&\sum_{i=1}^{l}Z_{i}(n)e^{-n(P(\gamma)-\epsilon)}\\
&\leq&e^{-n(P(\gamma)-\epsilon)} \sum_{i=1}^{l}\int_{\Gamma_{i}(x,n)}
e^{n\int \gamma d\delta_{n}(x)}
e^{n(\int \omega_{i}d\delta_{n}(x)-
Q(\omega_{i})-(J(K)-\epsilon))}l_{n}(dx)\\
&\leq&e^{n(-Q(\omega_{i})-(J(K)-\epsilon))}
e^{-n(P(\gamma)-\epsilon)}\sum_{i=1}^{l}\int_{\Gamma_{i}(x,n)}
e^{n\int (\gamma +\omega_{i}) d\delta_{n}(x)}l_{n}(dx)\\
&\leq&\sum_{i=1}^{l}e^{-n(P(\gamma)-\epsilon)}
e^{n(-Q(\omega_{i})-(J(K)-\epsilon))}
e^{n(P(\gamma +\omega_{i})+\epsilon)}\\
&=& le^{n(-J(K)+3\epsilon)}.
\end{eqnarray*}
Take the logarithme, divide by $n$ and the $\limsup$,
\[
\limsup_{n\rightarrow \infty}\frac{1}{n}\log \nu_{n}(K) 
\leq -J(K)+3\epsilon.
\]
$\epsilon$ being arbitrary, this proves Theorem $1$ $(1)$.
\end{proof}

\subsection{Proof of Theorem $1$ ($2$)}
\begin{proof}
It is a consequence of Theorem $1$ $(1)$. Indeed, we have to prove that
$\lim_{n\rightarrow \infty}\nu_{n}(V)=1$. Set $K=V^{c}$ the complement
of $V$ in $\mathcal{P}(X)$. We have $J(K)=J(m)$ for some $m\in
K$. Thus by Lemma $1$, $J(K)>0$ and for $n$ sufficiently large,
\[
1\geq \nu_{n}(V)\geq 1-e^{-nJ(K)}.
\]
\end{proof}
\subsection{Proof of Theorem $1$ ($3$)}
\begin{proof}
We have to show that the weak limits of
\[
\mu_{n}:= \frac{\int_{X} 
e^{n\int \gamma d\delta_{n}(x)}\delta_{n}(x)l_{n}(dx)}
{\int_{X}e^{n\int \gamma d\delta_{n}(x)}l_{n}(dx)}
\]
are contained in $\mathcal{P}_{e}(\gamma)$. Observe that we can write
$\mu_{n}=E_{\beta_{n}}(\delta_{n})$ where the expectation is taken
with respect to the probability measure on $X$,
\[
\beta_{n}(E):=\frac{\int_{E} 
e^{n\int \gamma d\delta_{n}(x)}l_{n}(dx)}
{\int_{X}e^{n\int \gamma d\delta_{n}(x)}l_{n}(dx)}.
\]
This means that we view $\delta_{n}$ as
a random variable on the probability space $(X,\beta_{n})$.

Let $V\subset \mathcal{P}(X)$ be a convex open 
neighborhood of $\mathcal{P}_{e}(\gamma)$ and $\epsilon >0$.
We consider a finite open cover $(B_{i}(\epsilon))_{i\leq N}$ of
$\mathcal{P}_{e}(\gamma)$ by balls of diameter $\epsilon$ all
contained in $V$.  

Decompose the set $U:=\cup_{i=1}^{N}B_{i}(\epsilon)$ as follows,
\[
U=\cup_{j=1}^{N'}U_{j}^{\epsilon},\ N'\geq N,
\]
where the sets $U_{j}^{\epsilon}$ are disjoints and contained in one
of the balls $(B_{i}(\epsilon))_{i\leq N}$. We have
\[
\mathcal{P}_{e}(\gamma) \subset U \subset V,
\] 
and $\sum_{j=1}^{N'}\nu_{n}(U_{j}^{\epsilon})=\nu_{n}(U)$.
We fix in each $U_{j}^{\epsilon}$ a probability measure $p_{j}$,
$j\leq N'$, and let $p_{0}$ be a probability  measure distinct from the above
ones (for example take $p_{0} \in V\backslash U$).

Define the following process on the space $(X,\beta_{n})$,
\[
\omega_{n}:=\sum_{j=1}^{N'}p_{j}1_{\delta_{n}^{-1}(U_{j}^{\epsilon})}+
(1-\nu_{n}(U))p_{0}.
\]
We have,
\begin{equation}
E_{\beta_{n}}(\omega_{n})=
\sum_{j=1}^{N'}\nu_{n}(U_{j}^{\epsilon})p_{j}+
(1-\nu_{n}(U))p_{0}. 
\end{equation}
Since $span \{g_{1}, g_{2}, \cdots\}$ is dense in $C_{\real}(X)$,
the topology generated by the metric:
\[
d(m, m'):=\sum_{k=1}^{\infty}2^{-k}|\int g_{k}dm - \int g_{k}dm' |,
\]  
is compatible with weak star topology. We will use it to evaluate the
distance between elements in $\mathcal{P}(X)$.

The probability measure $E_{\beta_{n}}(\omega_{n})$ lies in $V$ since
it is a convex combination of elements of the convex set $V$. 
We have then
$d(\mu_{n}, V)\leq d(\mu_{n},E_{\beta_{n}}(\omega_{n}))$. We
will show that
\[
d(\mu_{n},E_{\beta_{n}}(\omega_{n}))\leq \epsilon
\nu_{n}(U)+\frac{3}{2}\nu_{n}(U^{c}), 
\]
where $U^{c}=\mathcal{P}(X)\backslash U$ which is closed.

Set $\mu_{n,V}:=E_{\beta_{n}}\left ((1_{V}\circ
\delta_{n})\delta_{n} \right )$ (this defines a finite measure on $X$).
By definition of $\mu_{n}$ and $\mu_{n,V}$ and
the fact that $U\subset V$, we get
\[
\sum_{k\geq 1}2^{-k}|\mu_{n}(g_{k}) - \mu_{n,V}(g_{k})| \leq
\frac{1}{2}\nu_{n}(U^{c}). 
\]
It remains to show that 
\[
\sum_{k\geq 1}2^{-k}|\mu_{n,V}(g_{k}) -
E_{\beta_{n}}(\omega_{n})(g_{k})|\leq \epsilon 
\nu_{n}(U)+\nu_{n}(U^{c}).
\] 
We have for all $k\geq 1$,
\[
|\mu_{n,V}(g_{k})-E_{\beta_{n}}(\omega_{n})(g_{k})|\leq A+B+C
\]where,
\[
A=
\frac{\sum_{j=1}^{N'}\int_{(x:\delta_{n}(x)\in U_{j}^{\epsilon})}
e^{n\int \gamma
  d\delta_{n}(x)}|\delta_{n}(x)(g_{k})-m_{j}(g_{k})| l_{n}(dx)} 
{\int_{X}e^{n\int \gamma d\delta_{n}(x)}l_{n}(dx)},
\]
\[
B=\frac{\int_{(x:\delta_{n}(x)\in V\backslash U)}
e^{n\int \gamma
  d\delta_{n}(x)}\delta_{n}(x)(g_{k})l_{n}(dx)} 
{\int_{X}e^{n\int \gamma d\delta_{n}(x)}l_{n}(dx)}
\]
\[
C=|(1-\nu_{n}(U))m_{0}(g_{k})|.
\]
Thus, since we have for all $k\geq 1$, $\|g_{k}\|=1$,  by definition
of $\nu_{n}$ we get,
\begin{eqnarray*}
&&\sum_{k\geq 1}2^{-k}|\mu_{n,V}(g_{k})-E_{\beta_{n}}(\omega_{n})(g_{k})|\\
&\leq& \epsilon
  \sum_{j=1}^{N'}\nu_{n}(U_{j}^{\epsilon})+\frac{1}{2}\nu_{n}(U^{c})+
\frac{1}{2}(1-\nu_{n}(U))\\
&=& \epsilon \nu_{n}(U)+\nu_{n}(U^{c}).
\end{eqnarray*}
Finally we have obtained that
\begin{eqnarray*}
d(\mu_{n},E_{\beta_{n}}(\omega_{n}))&=&
\sum_{k\geq 1}2^{-k}|\mu_{n}(g_{k}) -
E_{\beta_{n}}(\omega_{n})(g_{k})|\\
&\leq& \epsilon \nu_{n}(U)+\frac{3}{2}\nu_{n}(U^{c}). 
\end{eqnarray*}
This implies the desired inequality,
\[
d(\mu_{n},V)\leq \epsilon
\nu_{n}(U)+\frac{3}{2}\nu_{n}(U^{c}). 
\]
By Theorem $1$ $(2)$, since $U^{c}$ is closed, we know that $\lim_{n\rightarrow
  \infty}\nu_{n}(U)=1$. Thus, $\limsup_{n\rightarrow
  \infty}d(\mu_{n},V)\leq \epsilon$, for all $\epsilon >0$. We
  conclude that $\limsup_{n\rightarrow
  \infty}d(\mu_{n},V)=0$. The neighborhood $V$ of
  $\mathcal{P}_{e}(\gamma)$ being arbitrary, this implies that all limit
  measures of $\mu_{n}$ are contained in $\mathcal{P}_{e}(\gamma)$. In
  particular, if $\mathcal{P}_{e}(\gamma)$ is reduced to one measure
  $\mu$, this shows that $\mu_{n}$ converges to $\mu$.
\end{proof}

\subsection{Proof of Theorem $2$}
\subsubsection{Proof of part $(1)$}
\begin{proof}
The map $g \rightarrow \delta_{n}(x)(g)$ being continuous, Theorem $2$
$(1)$ follows from Theorem $1$ $(1)$.
\end{proof}
\subsubsection{Proof of part $(2)$}
\begin{proof}
If $J_{g}(O_{d})= +\infty$ then there is nothing to do. Suppose
then $J_{g}(O_{d})< +\infty$.
Let $\varepsilon>0$ and choose $\alpha_{\varepsilon} \in O_{d}$ with
$\mathcal{P}_{g,\alpha_{\varepsilon}}(X) \ne \emptyset$ such that
\[
J_{g}(O_{d})>I_{g}(\alpha_{\varepsilon})-\varepsilon,
\]
We know from (\cite{Roc} Theorem 23.4 and 23.5) that, given $\alpha$ in
the interior of the affine hull of the domain $D(J_{g})$ of $J_{g}$,
there exists $\beta \in \real^{d}$ such that 
\[
Q_{\gamma}(\beta \cdot g)=\beta \cdot \alpha - J_{g}(\alpha).
\]
Let then $\beta_{\varepsilon}\in \real^{d}$ such that
\begin{equation}
Q_{\gamma}(\beta_{\varepsilon}\cdot g)=\beta_{\varepsilon} \cdot
\alpha_{\varepsilon}- J_{g}(\alpha_{\varepsilon}).
\end{equation}
Consider now a small neighborhood of $\alpha_{\varepsilon}$,
\[
O_{d,r}:=\{ \alpha \in \real^{d}:|\alpha_{\varepsilon}-\alpha|\leq r\},
\]
such that $O_{d,r} \subset O_{d}$.
Set 
\[
\psi_{n}(O_{d}):=\int_{X}e^{n\int \gamma d\delta_{n}(x)}
1_{(\delta_{n}(x)(g) \in O_{d})}l_{n}(dx),
\]
and $Z_{n}(O_{d}):=\frac{\psi_{n}(O_{d})}{\int_{X}e^{n\int \gamma
d\delta_{n}(x)}l_{n}(dx)}$.
We have, $Z_{n}(O_{d})\geq Z_{n}(O_{d,r})$ and
\[
\psi_{n}(O_{d,r})
\geq e^{-n\beta_{\epsilon}\cdot \alpha_{\epsilon}}e^{-n\|\beta_{\epsilon}\|r}
\int_{X}e^{n\int (\gamma+\beta_{\epsilon}\cdot g)d\delta_{n}(x)}
1_{(\delta_{n}(x)(g) \in O_{d,r})}l_{n}(dx)
\]
Thus
\[
 Z_{n}(O_{d,r})\geq e^{-n\beta_{\epsilon}\cdot
 \alpha_{\epsilon}}e^{-n\|\beta_{\epsilon}\|r} 
\frac{\int_{X}e^{n\int (\gamma+\beta_{\epsilon}\cdot g)d\delta_{n}(x)}
1_{(\delta_{n}(x)(g) \in O_{d,r})}l_{n}(dx)}
{\int_{X}e^{n\int \gamma d\delta_{n}(x)}l_{n}(dx)}.
\]
Set
\[
Z_{n}^{\varepsilon}(O_{d,r}):=
\frac{1}{n}\log
\frac{\int_{X}e^{n\int (\gamma+\beta_{\epsilon}\cdot g)d\delta_{n}(x)} 
1_{(\delta_{n}(x)(g) \in O_{d,r})}l_{n}(dx)}
{\int_{X}e^{n\int (\gamma+\beta_{\epsilon}\cdot g)d\delta_{n}(x)}l_{n}(dx)}
\]and
\[
Z_{n}(\beta_{\varepsilon}\cdot g):=\frac{1}{n}\log 
\frac{\int_{X}e^{n\int (\gamma+\beta_{\epsilon}\cdot g)d\delta_{n}(x)}
l_{n}(dx)} 
{\int_{X}e^{n\int \gamma d\delta_{n}(x)}l_{n}(dx)}.
\]
Therefore,
\[
\frac{1}{n}\log Z_{n}(O_{d,r})\geq 
-r\|\beta_{\varepsilon}\|+ 
(Z_{n}(\beta_{\varepsilon}\cdot g)-\beta_{\varepsilon}\cdot
\alpha_{\varepsilon})+ 
\frac{1}{n}\log Z_{n}^{\varepsilon}(O_{d,r}).
\]
From Kozlovski's theorem we deduce that
\[
\lim_{n\rightarrow
  \infty}Z_{n}(\beta_{\varepsilon}\cdot g)=
P(\gamma+\beta_{\varepsilon}\cdot
  g)-P(\gamma)=Q_{\gamma}(\beta_{\varepsilon}\cdot g).
\]
Thus
\[
\liminf_{n\rightarrow \infty}\frac{1}{n}\log Z_{n}(O_{d,r})\geq
-r\|\beta_{\varepsilon}\|+
 (Q_{\gamma}(\beta_{\varepsilon}\cdot g)-\beta_{\varepsilon}\cdot
\alpha_{\varepsilon})+\lim_{n\rightarrow \infty}\frac{1}{n}\log
 Z_{n}^{\varepsilon}(O_{d,r}).
\]
We will show that
\begin{equation}
\lim_{n\rightarrow \infty} Z_{n}^{\varepsilon}(O_{d,r})=1.
\end{equation}
Let us show how to finish the proof using $(11)$ :
\begin{eqnarray*}
\liminf_{n\rightarrow \infty} \frac{1}{n}\log Z_{n}(O_{d})&\geq&
-r\|\beta_{\varepsilon}\|+Q_{\gamma}(\beta_{\varepsilon} \cdot
g)-\beta_{\varepsilon} 
\cdot \alpha_{\varepsilon}\\ 
&=&-r\|\beta_{\varepsilon}\|-J_{g}(\alpha_{\varepsilon}) \\
&\geq& -r\|\beta_{\varepsilon}\|-J_{g}(O_{d}) -\varepsilon, 
\end{eqnarray*}
for any $\varepsilon >0$. Since $r>0$ was arbitray choosen, we let
$r\rightarrow 0$ and $\varepsilon \rightarrow 0$ respectively and we get
$\liminf_{T\rightarrow \infty} \frac{1}{T}\log Z_{T}(O_{d})\geq
-J_{g}(O_{d})$ which completes the proof of Theorem $2$ ($2$). 

It remains to show ($11$). Let $K_{d,r}$ be the complement set of
$O_{d,r}$ in the image 
$g*(\mathcal{P}_{inv}(X))$ of $\mathcal{P}_{inv}(X)$ under the continuous map
$g* : m\rightarrow m(g)$. We have
$Z_{n}^{\varepsilon}(O_{d,r})+Z_{n}^{\varepsilon}(K_{d,r})=1$. The
goal is to show using Theorem $1$ that, $Z_{n}^{\varepsilon}(K_{d,r})$
decrease to zero as $n\rightarrow \infty$ (in fact exponentially fast).

Consider $J^{\varepsilon}:=J_{\gamma+ \beta_{\varepsilon} \cdot g}$,
which is the functional $J$ corresponding to
$Q^{\varepsilon}:=Q_{\gamma+ \beta_{\varepsilon} \cdot g}$.
By definitions $(3)$ and $(4)$, we have for any continuous function
$\omega$ on $X$,
\[
Q^{\varepsilon}(\omega)=P(\gamma+ \beta_{\varepsilon} \cdot g
+\omega)-P(\gamma+ \beta_{\varepsilon} \cdot g),
\]
and for any probability measure $m$ on $X$,
\[
J^{\varepsilon}(m)=\sup_{\omega}(\int \omega dm -Q^{\varepsilon}(\omega)).
\]
From this we deduce easily that
\[
J^{\varepsilon}(m):= J_{\gamma}(m)+Q_{\gamma}(\beta_{\varepsilon}\cdot
g)-\int \beta_{\varepsilon}\cdot g dm,
\]and
\[
\inf_{m(g)=\alpha}J^{\varepsilon}(m)=\inf_{m(g)=\alpha}J_{\gamma}(m)+
Q_{\gamma}(\beta_{\varepsilon}\cdot
g)- \beta_{\varepsilon}\cdot \alpha.
\]
The set $K_{d,r}$ is compact in $\real^{d}$, then by Theorem $1$ $(1)$
we get
\[
\limsup_{n \rightarrow \infty}\frac{1}{n}\log
Z_{n}^{\varepsilon}(K_{d,r})
\leq -J^{\varepsilon}(K),
\]where $K:=(g*)^{-1}(K_{d,r})$ which is a closed subset of $\mathcal{P}(X)$.
If $J^{\varepsilon}(K)=+\infty$ there is nothing to do and the result
follows. The key point is to prove that $J^{\varepsilon}(K)>0$. For
this, set
Set
\[
J_{g}^{\varepsilon}(\alpha):=
J_{g}(\alpha)+Q(\beta_{\varepsilon}\cdot g)-\beta_{\varepsilon}   
\cdot \alpha.
\]
The functional $J^{\varepsilon}$ is non negative (since
$Q^{\varepsilon}(0)=0$), lower 
semicontinuous and then it achieves its minimum on compact sets. 
We have $J_{g}^{\varepsilon}(\alpha)\geq 0$ and 
$J_{g}^{\varepsilon}(\alpha_{\varepsilon})=0$ (see $(10)$). Recall that,
if $J_{g}^{\varepsilon}(\alpha)=0$ for some $\alpha \in K_{d,r}$, then
there will 
correspond to $\alpha$ an equilibrium state $m_{\alpha}\in K$ for the
potential $\gamma+\beta_{\varepsilon}\cdot g$ such that
$m_{\alpha}(g)=\alpha$.  
The vector $\alpha_{\varepsilon}$ is the unique point realizing the
minimum, i.e the unique solution for the
equation $J_{g}^{\varepsilon}(\alpha)=0$.
Indeed, two different solutions will produce two distinct
equilibrium states for the potential $\gamma+\beta_{\varepsilon}\cdot
g$ which contradicts our standing assumption of Theorem $1$.
Since $\alpha_{\varepsilon} \in O_{d,r}$, then
$J_{g}^{\varepsilon}(\alpha)>0$ for $\alpha \in K_{d,r}$. On the other hand
the set $K_{d,r}$ being compact, by the lower semicontinuity of
$J^{\varepsilon}$ we have $J^{\varepsilon}(K)=\inf_{\alpha \in
  K_{d,r}} J_{g}^{\varepsilon}(\alpha)>0$. Thus we have proved that
\[
\limsup_{n\rightarrow \infty} \frac{1}{n}\log
Z_{n}^{\varepsilon}(K_{d,r}) \leq -J^{\varepsilon}(K)<0
\]
from which ($11$) follows immediately.
\end{proof}
\subsection{Proof of Theorem $1$ ($4$)}
\begin{proof}
Let $O \subset \mathcal{P}(X)$ be an open set, $\epsilon >0$ and
choose $m_{\epsilon} \in O$ such that 
\[
I(m_{\epsilon}) \leq \rho(O) +\epsilon.
\]
For each $N$ we set,
\[
d_{N}(m, m'):=\sum_{k=1}^{N}2^{-k}|\int g_{k}dm - \int g_{k}dm' |.
\] 
Set $2r=\inf \{d(m, m_{\epsilon}): m\in \mathcal{P}(X)\backslash
O\}$. We have $r>0$, since $\mathcal{P}(X)\backslash
O$ is a compact subset of $\mathcal{P}(X)$.
Since for all $k$, $\|g_{k}\|=1$, we have $0\leq d(m,
m')-d_{N}(m, m')\leq 2^{-(N-1)}$. Thus, for $N$ suffuciently
large,
\[
O_{\epsilon, r}:=\{m\in \mathcal{P}(X):d_{N}(m, m_{\epsilon})<r\}
\subset O. 
\]
For each $\alpha =(\alpha_{1}, \ldots, \alpha_{N})\in \real^{N}$
denote $\|\alpha\|_{N}=\sum_{k=1}^{N}2^{-k}|\alpha_{k}|$.
Let $g=(g_{1}, \ldots, g_{N})$ and set
$\alpha_{\epsilon}= m_{\epsilon}(g):=(\int
g_{1}dm_{\epsilon},\ldots, \int g_{N}dm_{\epsilon})$ and
\[
O_{N,r}:=\{\alpha \in \real^{N}: \|\alpha_{\epsilon}-\alpha \|_{N}<r\}.
\]
Then, $g (O_{\epsilon, r})=O_{N,r}\cap g(\mathcal{P}(X))$. From
Theorem $2$ ($2$) we get,  
\begin{eqnarray*}
&& \liminf_{n\rightarrow \infty}\frac{1}{n}\log
\frac{\int_{X}e^{n\int \gamma d \delta_{n}(x)}
1_{(\delta_{n}(x) \in O)}l_{n}(dx)}
{\int_{X}e^{n\int \gamma d \delta_{n}(x)}l_{n}(dx)}\\
&\geq& \liminf_{n\rightarrow \infty}\frac{1}{n}\log
\frac{\int_{X}e^{n\int \gamma d \delta_{n}(x)}
 1_{(\delta_{n}(x) \in O_{\epsilon, r})}l_{n}(dx)}
{\int_{X}e^{n\int \gamma d \delta_{n}(x)}l_{n}(dx)}\\
&=& \liminf_{n\rightarrow \infty}\frac{1}{n}\log
\frac{\int_{X}e^{n\int \gamma d \delta_{n}(x)}
1_{(\delta_{n}(x)(g) \in O_{N,r})}l_{n}(dx)}
{\int_{X}e^{n\int \gamma d \delta_{n}(x)}l_{n}(dx)}\\
&\geq& -J_{g}(O_{N, r})\\
&\geq& -J_{g}(\alpha_{\epsilon})
\geq -J(m_{\epsilon}) \geq -J(O) -\epsilon,
\end{eqnarray*}
for any $\epsilon >0$. This complete the proof of Theorem $1$ $(4)$.
\end{proof}

\end{document}